\newtheorem*{thm1}{ \bf Theorem 1}
\newtheorem*{thm2}{ \bf Theorem 2}
\newtheorem{lem}{ \bf Lemma}[section]
\newtheorem{defn}{ \bf Definition}[section]
\newtheorem{prop}{\bf Proposition}[section]
\newcommand{\be}{\begin{equation}}
\newcommand{\ee}{\end{equation}}
\newcommand{\Bea}{\begin{eqnarray*}}
\newcommand{\Eea}{\end{eqnarray*}}
\newcommand{\bea}{\begin{eqnarray}}
\newcommand{\eea}{\end{eqnarray}}
\numberwithin{equation}{section}
\def\dd{{\delta}^D}
\def\Rt{{\check{R}}}
\def\gt{\tilde{g}}
\def\la{\Delta}
\def\a{\alpha}
\begin{document}
\title[Unstable critical metrics]{Some Unstable critical metrics for the $L^{\frac{n}{2}}$-norm of\\ the curvature tensor }
\author{Atreyee Bhattacharya and Soma Maity}
\address{Department of Mathematics, Indian Institute of Science,
Bangalore-12, India}
\email{somamaity@math.iisc.ernet.in\\atreyee@math.iisc.ernet.in}
\begin{abstract}
We consider the Riemannian functional defined on the space of Riemannian metrics with unit volume on a closed smooth manifold $M$ given by $\mathcal{R}_{\frac{n}{2}}(g):= \int_M |R(g)|^{\frac{n}{2}}dv_g$ where $R(g)$, $dv_g$ denote the Riemannian curvature and volume form corresponding to $g$. We show that there are locally symmetric spaces which are unstable critical points for this functional.
\end{abstract}
\thanks{}
\keywords{Riemannian functional, stability}
\maketitle
\section{Introduction} Let $M$ be a closed smooth manifold of dimension $n\geq 3$ and $\mathcal{M}_1$ be the space of Riemannian metrics with unit volume on $M$ endowed with the $C^{2,\a}$-topology for any $\a\in (0,1)$. In this paper we study the following Riemannian functional
\Bea \mathcal{R}_p(g)=\int_M|R(g)|^pdv_g
\Eea
where $R(g)$ and $dv_g$ denote the corresponding Riemannian curvature tensor and volume form, $p\in [2,\infty).$ Let $S^2(T^*M)$ be the space of symmetric two tensors on $M$ and $\mathcal{W}$ be the subspace of $S^2(T^*M)$ orthogonal to the tangent space of the orbit of $g$ under the action of the group of diffeomorphisms of $M$ at $g$. Let $H$ denote the Hessian of $\mathcal{R}_p$ at any critical metric. For definitions of critical metric and Hessian we refer to section 2.
\begin{defn}
{\rm  Let $g$ be a critical point for $\mathcal{R}_{p|\mathcal{M}_1}$. $g$ is {\it stable} for $\mathcal{R}_p$ if there is an $\epsilon>0$ such that for every element $h$ in $\mathcal{W}$,
\be H(h,h)\geq \epsilon \|h\|^2
\ee
where $\|.\|$ denote the $L^2$-norm on $S^2(T^*M)$ defined by $g$}.
\end{defn}
Spherical space forms are stable for $\mathcal{R}_p$ for $p\geq 2$ and hyperbolic manifolds are stable for $p\geq \frac{n}{2}$ \cite{SM}. The stability of locally symmetric spaces is not known in general. In this paper we prove the following.
\begin{thm1}  Let $(M,g)$ be an irreducible locally symmetric space of compact type. If the universal cover of $M$ is one of the following then $(M,g)$ is not stable for $\mathcal{R}_{\frac{n}{2}}$.
$$ SU(q) (q\geq 3), \ \ Sp(q)(q\geq 2), \ \ Spin(5), \ \ Spin(6), \ \ SU(2q+2)/Sp(q+1),$$
$$Sp(q+l)/Sp(q)\times Sp(l)(l,q \geq 1), \ \ E_6/F_4, \ \  F_4/Spin(9)$$
Moreover, $(M,g)$ is a saddle point for $\mathcal{R}_{\frac{n}{2}}.$
\end{thm1}
The theorem follows by restricting $H$ to the space of conformal variations of any irreducible symmetric space and using an estimate for the first positive eigenvalue of the Laplacian of $(M,g)$ . \\
\\
Let $(M,g)$ be a simply connected irreducible symmetric space of compact and $\lambda_1$ and $s$ denote its first positive eigenvalue of the Laplacian and scalar curvature of it. We prove that if $\frac{\lambda_1}{s}\geq \frac{2}{n}$ then $(M,g)$ is stable for $\mathcal{R}_{\frac{n}{2}}$ restricted to the conformal variations of $g$. The above condition is also a necessary and sufficient criterion for the stability of the identity map of $(M,g)$ as a harmonic map. In \cite{HU} the stability of the identity map of these spaces has been studied in detail. We observe that if $(M,g)$ is not a sphere then $g$ is stable for $\mathcal{R}_{\frac{n}{2}}$ if and only if it is stable for the identity map.\\
\\
Let $(M,g)$ be an irreducible symmetric space of compact type or a compact quotient of an irreducible locally symmetric space of non-compact type. From the proof of the theorem we observe that if $(M,g)$ is neither one of the type in Theorem 1 then $(M,g)$ is stable for $\mathcal{R}_p (p\geq \frac{n}{2})$ restricted to the conformal variations of $(M,g)$.
\section{Proof}
Let $\{e_i\}$ be an orthonormal basis at a point of $M$. $\Rt$ is a symmetric 2-tensor defined by $$\Rt(x,y)=\sum R(x,e_i,e_j,e_k)R(y,e_i,e_j,e_k).$$
Let $D$ and $D^*$ be the Riemannian connection, its formal adjoint  and $s$ denote the scalar curvature.
\\
$d^D:S^2(T^*M)\to \Gamma (T^*M\otimes\Lambda^2M)$ and its formal adjoint $\dd$ are defined by
\Bea &&d^D\alpha(x,y,z):= (D_y\alpha)(x,z)-(D_z\alpha)(x,y)\\
&&\dd(A)(x,y)=\sum\{D_{e_i}A(x,y,e_i) +D_{e_i}A(y,x,e_i)\}
\Eea
where $\Lambda^2M$ and $\Gamma (T^*M\otimes\Lambda^2M)$ denote alternating two forms and sections of $T^*M\otimes\Lambda^2M$.
Let $g_t$ be a one-parameter family of metrics with $\frac{d}{dt}(g_t)_{|t=0}=h$ and $T(t)$ be a tensor depending on $g_t.$ Then $\frac{d}{dt}T(t)_{|t=0}$ is denoted by $T'_g(h)$. Define $\Pi_h(x,y)= \frac{d}{ dt}D_xy_{|t=0}$ where $x,y$ are two fixed vector fields. The suffix $h$ will be omitted when there will not be any ambiguity.
Consider any $f\in C^{\infty}(M)$. Note that
\bea g(\Pi_{fg}(x,y),z)&=& \frac{1}{2}[D_xfg(y,z)+D_yfg(x,z)-D_zfg(x,y)]\\
\nonumber&=& \frac{1}{2}[df(x)g(y,z)+df(y)g(x,z)-df(z)g(x,y)]
\eea
Let $\la$ denote the Laplace operator which acts on $C^{\infty}(M).$ We use the following definition.
$$\la f=-tr(Ddf)$$
$(,)$, $|.|$, $\langle,\rangle$, $\|.\|$ denote point-wise inner product, point-wise norm, global inner product and global norms induced by $g.$

$\nabla \mathcal{R}_p(g)$ in $S^2(T^*M)$ is called the {\it gradient} of $\mathcal{R}_p$ at $g$ if for every $h\in S^2(T^*M)$,
$$\frac{d}{dt}_{|t=0}\mathcal{R}_p(g+th)= \mathcal{R}_{p|g}'.h=\langle \nabla \mathcal{R}_p(g),h\rangle$$
$g$ is called a {\it critical point} for $\mathcal{R}_{p|\mathcal{M}_1}$ if the component of $\nabla\mathcal{R}_p(g)$ along the tangent space of $\mathcal{M}_1$ at $g$ is zero. The Hessian at a critical point of $\mathcal{R}_p$ is given by
 $$H(h_1,h_2)=\langle (\nabla \mathcal{R}_p)'_g(h_1),h_2 \rangle \hspace{3mm} \forall \ h_1,h_2\in S^2(T^*M)$$

\begin{prop} Let $(M,g)$ be a compact irreducible symmetric space and $f\in C^{\infty}(M)$. Then
\Bea H(fg,fg)=p|R|^{p-2}[a\|\la f\|^2-b\|df\|^2+c\|f\|^2]
\Eea
where, $a$, $b$, $c$ is given by,
\Bea &&a= n-1+(p-2)\frac{4s^2}{n^2|R|^2}\\
     &&b=4(p-1)\frac{s}{n}\\
     && c=(p-\frac{n}{2})|R|^2.
\Eea
\end{prop}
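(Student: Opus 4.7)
Since $(M,g)$ is an irreducible compact symmetric space, it is Einstein with $\mathrm{Ric}=(s/n)g$, its curvature is parallel ($DR=0$), and $|R|^2$ is constant; in particular $\tilde R=(|R|^2/n)g$, so $\nabla\mathcal{R}_p(g)$ is a constant multiple of $g$ and $g$ is critical for $\mathcal{R}_p|_{\mathcal{M}_1}$. These three facts collapse every tensor contraction appearing in the computation into a scalar expression in $s$, $n$, and $|R|^2$.

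The plan is to compute $H(fg,fg)$ by first computing the full second variation $\mathcal{R}_p''(fg,fg)=\frac{d^2}{dt^2}\mathcal{R}_p((1+tf)g)|_{t=0}$ and then subtracting the Lagrange-multiplier correction $\lambda\,\mathrm{vol}''(fg,fg)$ coming from the constraint $\mathrm{vol}(g)=1$. The multiplier $\lambda$ satisfies $\nabla\mathcal{R}_p(g)=(\lambda/2)g$; using $\nabla\mathrm{vol}=g/2$ and the scaling identity $\mathcal{R}_p(cg)=c^{n/2-p}\mathcal{R}_p(g)$ one pins down $\lambda n=(n-2p)|R|^p$ without computing the gradient explicitly. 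Specialising to constant $f$ (where $\Delta f=0=df$) this piece alone already yields $H(fg,fg)=p(p-\tfrac{n}{2})|R|^p f^2$, which matches $p|R|^{p-2}c\|f\|^2$ with $c=(p-n/2)|R|^2$ and serves as a sanity check on the $c$-coefficient.

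For the main computation, write $g_t=e^{2\phi_t}g$ with $\phi_t=\tfrac12\log(1+tf)$, so $\phi_t'(0)=f/2$ and $\phi_t''(0)=-f^2/2$. Use the conformal-change formula for the lower-indexed Riemann tensor (which expresses the first variation of $R_{ijkl}$ as a Kulkarni--Nomizu product of $g$ with a Schouten-type tensor built from $Ddf$ and $df\otimes df$), together with $g_t^{ij}=(1+tf)^{-1}g^{ij}$ and $dv_{g_t}=(1+tf)^{n/2}dv_g$, to expand $|R(g_t)|_{g_t}^{p}\,dv_{g_t}$ to order $t^2$. Writing $|R|^p=(|R|^2)^{p/2}$ and applying the chain rule splits $\mathcal{R}_p''(fg,fg)$ into four sources: (i) a $\tfrac{p-2}{2|R|^2}$ factor times $((|R|^2)')^2$, (ii) the pure second variation of $|R|^2$, (iii) a mixed term coupling $(|R|^2)'$ to the volume variation $(n/2)f\,dv$, and (iv) the second variation of the volume form. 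The Einstein condition reduces every contraction of curvature with $Ddf$ or $df\otimes df$ to a scalar multiple of $\Delta f$ or $|df|^2$---for instance $R_{ijkl}g^{ik}(Ddf)^{jl}=-(s/n)\Delta f$ and $R_{ijkl}g^{ik}\nabla^jf\nabla^lf=(s/n)|df|^2$---and integration by parts (using $\langle f,\Delta f\rangle=\|df\|^2$) then regroups everything into $a\|\Delta f\|^2-b\|df\|^2+c\|f\|^2$.

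The main obstacle is the delicate bookkeeping of the conformal variation of $R_{ijkl}$ and the extraction of the exact coefficients. The factor $n-1$ (rather than $n$) in $a$ reflects the trace-free contribution of the Schouten-type deformation; the correction $(p-2)\cdot 4s^2/(n^2|R|^2)$ in $a$ comes from source (i), since at a symmetric space the leading $\Delta f$-part of $(|R|^2)'$ is proportional to $s/n$; and the coefficient $b=4(p-1)s/n$ collects contributions from sources (ii) and (iii) together with the $\Delta f\cdot f$ cross term from (i) after integration by parts. Each individual step is routine, but the cumulative algebra requires careful accounting to isolate the coefficients in the stated form.
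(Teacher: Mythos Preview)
Your plan is sound and would yield the proposition, but it follows a genuinely different route from the paper's argument. The paper does \emph{not} compute the second variation of $\mathcal{R}_p$ along the conformal path $(1+tf)g$. Instead it starts from a ready-made Hessian formula (equation (2.2), imported from \cite{SM}) valid at any irreducible symmetric space for \emph{arbitrary} variations $h_1,h_2$, and then specialises each of the six terms in that formula to $h_1=h_2=fg$. This is done via four short lemmas: Lemma~2.1 computes $\langle \delta^D(D^*)'(fg)R,\,fg\rangle$, Lemma~2.2 computes $\langle D^*R'_g(fg),\,d^Dfg\rangle$, Lemma~2.3 computes $\langle \check R'_g(fg),\,fg\rangle$, and Lemma~2.4 computes $(|R|^p)'(fg)$. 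Each lemma uses only the explicit formula for $\Pi_{fg}$ (equation (2.1)) and the first-variation formula for $R'_g(fg)$ (equation (2.7)), together with the Einstein condition; no second-order conformal expansion and no Lagrange-multiplier manipulation is needed, because the constraint and the variation of the inner product are already baked into the formula from \cite{SM}.

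Your approach trades this dependence on an external Hessian identity for the standard conformal-change formula for $R_{ijkl}$, which makes the computation self-contained but pushes all the work into the second-order Taylor expansion and the volume-correction bookkeeping. The paper's route is more modular (each lemma isolates a geometrically meaningful piece), whereas yours is conceptually cleaner (one formula, one expansion). One caution: the paper defines $H(h,h)=\langle(\nabla\mathcal{R}_p)'_g(h),h\rangle$, and you are implicitly assuming this equals $\mathcal{R}_p''(h,h)-\lambda\,\mathrm{vol}''(h,h)$; your constant-$f$ sanity check confirms this at the level of the $c$-coefficient, but in carrying out the full computation you should verify that the variation of the $L^2$ pairing does not introduce a discrepancy (it does not here, essentially because $\nabla\mathcal{R}_p(g)$ is a constant multiple of $g$, but this deserves a line of justification).
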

Let $(M,g)$ be a closed irreducible symmetric space and $h_1,h_2\in S^2(T^*M).$ From \cite{SM} (4.1) we have,
\bea H(h_1, h_2) &=& -p|R|^{p-2}(\langle \dd (D^*)'_g(h_1)R,h_2\rangle+\langle D^*R'_g(h_1),d^D h_2\rangle)
-p|R|^{p-2}\langle\Rt '_g(h_1),h_2\rangle\\
\nonumber&&-p\langle(|R|^{p-2})'_g(h_1)R, Dd^D h_2\rangle-\frac{p}{n}|R|^2\langle(|R|^{p-2})'_g(h_1)g,h_2\rangle\\
\nonumber&&+\frac{1}{2}\langle(|R|^{p})'_g(h_1)g,h_2\rangle
+\frac{p}{n}\|R\|^p\langle h_1,h_2\rangle
\eea
Next we compute each term of the above equation for conformal variations to obtain the Proposition.

\begin{lem}$\langle \dd(D^*)'(fg)R,fg\rangle=4\frac{s}{n}\|df\|^2 $
\end{lem}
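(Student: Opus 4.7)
The plan is a direct index-level computation that exploits the fact that the base metric is locally symmetric, so $DR = 0$ and in particular $D^*R = 0$ at $g$. The variation of the inverse metric in the definition of $D^*$ therefore contributes nothing to $(D^*)'_g(fg)R$, because it is multiplied by $\nabla R$. What remains is the variation $\Pi_{fg}$ of the Christoffel symbols acting on each of the four slots of $R$, giving the schematic expression
\[
(D^*)'_g(fg)R_{\,pqr} = g^{ia}\bigl[\Pi^m_{ia}R_{mpqr} + \Pi^m_{ip}R_{amqr} + \Pi^m_{iq}R_{apmr} + \Pi^m_{ir}R_{apqm}\bigr].
\]

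First I would substitute $\Pi^m_{ij} = \tfrac{1}{2}(f_i\delta^m_j + f_j\delta^m_i - g_{ij}f^m)$ into each of the four terms and simplify using just two ingredients: the antisymmetry identity $g^{am}R_{amcd}=0$ and the standard Ricci contraction $g^{am}R_{acmd}=\mathrm{Ric}_{cd}$. This produces a 3-tensor $A := (D^*)'_g(fg)R \in \Gamma(T^*M\otimes\Lambda^2 M)$ expressible as a linear combination of $f^m R_{mpqr}$, $f_q\,\mathrm{Ric}_{pr}$, $f_r\,\mathrm{Ric}_{pq}$, and two further singly-contracted Riemann terms.

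Next, since $\langle B, fg\rangle = \int f\,\mathrm{tr}(B)\,dv_g$ for any symmetric $2$-tensor $B$, it suffices to compute $\mathrm{tr}(\dd A) = g^{pq}(\dd A)_{pq}$. Using the paper's definition of $\dd$ and the parallelism of the metric, this collapses to $2\nabla^r(g^{pq}A_{pqr})$. The inner trace $g^{pq}A_{pqr}$ is a linear combination of $f^m\,\mathrm{Ric}_{mr}$ and $s\,f_r$ (the antisymmetric $R$-traces drop out), and invoking irreducibility via $\mathrm{Ric} = \tfrac{s}{n}g$ collapses everything to a scalar multiple of $f_r$. A short arithmetic check yields $g^{pq}A_{pqr} = -\tfrac{2s}{n}f_r$.

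Finally, since $s$ is constant on a locally symmetric space and the paper's sign convention gives $\nabla^r\nabla_r f = -\la f$, we obtain $\mathrm{tr}(\dd A) = \tfrac{4s}{n}\la f$, and hence $\langle \dd A, fg\rangle = \tfrac{4s}{n}\int f\,\la f\,dv_g = \tfrac{4s}{n}\|df\|^2$ by integration by parts. I expect the principal obstacle to be the index bookkeeping: one has to apply the Riemann symmetries correctly to each singly-contracted term, and the $n$-dependent coefficient $\tfrac{6-n}{2}$ appearing in front of $f^mR_{mpqr}$ in the intermediate expression for $A$ has to combine with the Ricci-trace contributions in just the right way to produce the stated coefficient $4s/n$.
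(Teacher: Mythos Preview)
Your proposal is correct and is essentially the paper's argument reorganized: the paper moves $\dd$ across via the adjoint identity $\langle \dd A, fg\rangle=\langle A, d^D(fg)\rangle$ and then contracts the same $\Pi$-terms in $A=(D^*)'(fg)R$ against $d^D(fg)=df\wedge g$, which is exactly your trace-then-integrate-by-parts step done in reverse order. The curvature bookkeeping you outline (drop the $(g^{-1})'\!\cdot\! DR$ term by local symmetry, substitute $\Pi_{fg}$, reduce all contractions via $\mathrm{Ric}=\tfrac{s}{n}g$) matches the paper's computations (2.3)--(2.4), and your intermediate value $g^{pq}A_{pqr}=-\tfrac{2s}{n}f_r$ is consistent with the paper's pointwise result $4\tfrac{s}{n}|df|^2$.
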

\begin{proof} Let $\gt(t)$ be an one-parameter family of metrics with $\tilde{g}(0)=g$ and $T$ be a $(0,4)$ tensor independent of $t$. Expressing $D^*$ in a local coordinate chart and differentiating it, we obtain,
\Bea (D^*T)(x,y,z)=-(\gt^{kj})'(D_kT)_{jxyz}+\gt^{kj}[T_{\Pi_{kj}xyz}
+T_{j\Pi_{kx}yz}+T_{jx\Pi_{ky}z}+T_{jxy\Pi_{kz}}]
\Eea
Note that, $\Pi$ acting on two vector fields gives a vector field. Now evaluating $(D^*)'_g(h)(R)$ on an orthonormal basis we have,
\Bea (D^*)'_g(h)(R)_{jkl}= R_{\Pi_{ii}jkl}+R_{i\Pi_{ij}kl}+R_{ij\Pi_{ik}l}+R_{ijk\Pi_{il}}.
\Eea
From the definition of $d^D$ we have,
$$d^Dfg(x,y,z)=D_yfg(x,z)-D_zfg(x,y)=df(y)g(x,z)-df(z)g(x,y)$$
Combining these two,
\Bea \sum(D^*)'(fg)_{jkl}d^D(fg)_{jkl}&=&2\sum[R_{\Pi_{ii}jkj}+R_{i\Pi_{ij}kj}+R_{ij\Pi_{ik}j}]df_k\\
\Eea
Let $\mu$ be the Einstein constant of $(M,g).$ Now using (2.2) we have,
\bea 2\sum R_{\Pi_{ii}jkj}df_k&=&2\sum g(\Pi(e_i,e_i),e_m)R(e_m,e_j,e_k,e_j)df(e_k)\\
\nonumber&=&-(n-2)\mu |df|^2
\eea
Similarly,
\be 2\sum R_{ij\Pi_{ik}j}df_k=n\mu|df|^2 \ \ {\rm and} \ \  2\sum R_{i\Pi_{ij}kj}df_k=2\mu|df|^2
\ee
Combining the equations (2.3) and (2.4) the proof of the lemma follows.
\end{proof}
\begin{lem}$\langle D^*R'_g(fg),d^D fg\rangle=-(n-1)\|\la f\|^2+(n-4)\frac{s}{n}\|df\|^2$
\end{lem}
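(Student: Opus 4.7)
My plan is to reduce the pairing to scalar integrals in $\Delta f$ and $df$ by computing both factors explicitly and then applying the Einstein condition together with the Bochner identity. First, using (2.2) and the Palais formula $R'_g(h)(X,Y)Z = (D_X\Pi_h)(Y,Z) - (D_Y\Pi_h)(X,Z)$, a short computation in a normal frame at a point yields
\[
R'_g(fg)(X,Y,Z,W) = fR(X,Y,Z,W) + \tfrac{1}{2}\bigl[Ddf(X,Z)g(Y,W) - Ddf(Y,Z)g(X,W) - g(Y,Z)Ddf(X,W) + g(X,Z)Ddf(Y,W)\bigr],
\]
so that $R'_g(fg)$ has the full symmetries of a curvature tensor. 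Similarly the definition of $d^D$ gives $d^D(fg)(X,Y,Z) = df(Y)g(X,Z) - df(Z)g(X,Y)$, and hence
\[
Dd^D(fg)(W,X,Y,Z) = Ddf(W,Y)g(X,Z) - Ddf(W,Z)g(X,Y).
\]

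Next I transfer $D^*$ across the pairing by the formal-adjoint relation, reducing (up to a sign dictated by the paper's conventions for $D^*$ and $\Delta = -\mathrm{tr}(Dd)$) to a pointwise pairing of $R'_g(fg)$ against $Dd^D(fg)$. Exploiting the antisymmetry of $R'_g(fg)$ in its last two slots, this collapses to a single sum $\sum_{i,j,k}R'_g(fg)(e_i,e_j,e_j,e_k)\,Ddf(e_i,e_k)$ in an orthonormal frame. The $fR$-piece produces the Ricci-type contraction $\sum_j R(e_i,e_j,e_j,e_k)$, which on an Einstein background $\mathrm{Ric} = (s/n)g$ (available because every irreducible symmetric space is Einstein) collapses to a multiple of $f(s/n)\delta_{ik}$; the remaining Hessian-times-$g$ piece of $R'_g(fg)$ produces $|Ddf|^2$ and $(\Delta f)^2$ contributions with coefficients depending only on $n$.

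Finally I integrate over $M$. Two facts are used: integration by parts gives $\int_M f\Delta f\,dv_g = \|df\|^2$, and the Bochner identity on an Einstein metric reads
\[
\int_M |Ddf|^2\,dv_g = \|\Delta f\|^2 - \tfrac{s}{n}\|df\|^2.
\]
Substituting these and gathering the coefficients in front of $\|\Delta f\|^2$ and $\|df\|^2$ yields the claimed identity. The calculation itself is elementary; the main obstacle is careful bookkeeping of signs in the Ricci contraction (in particular the sign of $\sum_j R(e_i,e_j,e_j,e_k)$ relative to $\mathrm{Ric}(e_i,e_k)$ in the chosen convention) and keeping the $fR$ and Hessian-times-$g$ pieces of $R'_g(fg)$ cleanly separated throughout.
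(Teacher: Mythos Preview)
Your proposal is correct and follows essentially the same route as the paper: compute $R'_g(fg)$ explicitly (you via the Palais formula, the paper by citing Besse~1.174(c), arriving at the same expression~(2.7)), compute $Dd^D(fg)$ as in~(2.6), use the adjointness $\langle D^*R'_g(fg),d^Dfg\rangle=\langle R'_g(fg),Dd^Dfg\rangle$, reduce the contraction using the Einstein condition, and finish with the Bochner identity $\|Ddf\|^2=\|\Delta f\|^2-\tfrac{s}{n}\|df\|^2$. The paper records the intermediate pointwise result $(R'_g(fg),Dd^Dfg)=-(n-2)|Ddf|^2-|\Delta f|^2-2\mu|df|^2$, which is exactly what your ``single sum'' collapses to once the $fR$-piece is contracted against $Ddf$ (note your sum should carry an overall factor of~$2$ from the antisymmetry step).
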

\begin{proof} By a simple calculation we have,
\be D^2_{x,y}fg(u,v)=Ddf(x,y)g(u,v)
\ee
and
\be Dd^Dfg(x,y,z,w)=Ddf(x,z)g(y,w)-Ddf(x,w)g(y,z).\ee
From \cite{BA} 1.174(c) and using (2.5) we have,
\bea R'_g(fg)(x,y,z,u)&=&-\frac{1}{2}[Ddf(y,z)g(x,u)+Ddf(x,u)g(y,z)-Ddf(x,z)g(y,u)\\
\nonumber &&-Ddf(y,u)g(x,z)]+fR(x,y,z,u)\eea
Therefore,
\Bea (R'_g(fg),Dd^Dfg)&=&Ddf_{ik}R'_g(fg)_{ijkj}-Ddf_{il}R'_g(fg)_{ijjl}\\
&=& -(n-2)|Ddf|^2-|\la f|^2-2\mu|df|^2
\Eea
Using Bochner-Weitzenb\"{o}k formula on the space of one forms we have,
$$\la df=D^*Ddf+(n-1)cdf$$
Hence the lemma follows.
\end{proof}
\begin{lem}$\langle\Rt '_g(fg),fg\rangle= 4\frac{s}{n}\|df\|^2-|R^2|\|f\|^2 $
\end{lem}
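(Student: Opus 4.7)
My plan is to compute $\check R'_g(fg)$ from the intrinsic definition of $\check R$ and then take its global inner product with $fg$. Writing
\[
\check R_{ab}=g^{ip}g^{jq}g^{kr}R_{aijk}R_{bpqr}
\]
makes explicit that $\check R$ involves three metric-inverse contractions of two copies of the $(0,4)$ Riemann tensor. Under the variation $g_t=g+tfg$, each of the three $g^{-1}$ factors differentiates to $-fg^{-1}$, producing a ``metric'' piece $-3f\check R_{ab}$; each of the two $R$ factors varies by formula (2.7). Because the inner product against $fg$ of any symmetric two-tensor $A$ equals $f\,\mathrm{tr}(A)$ pointwise, the only quantity I actually need is the pointwise trace
\[
\mathrm{tr}(\check R'_g(fg)) = -3f|R|^2 + 2(R'_g(fg),R),
\]
where $(\cdot,\cdot)$ denotes the full pointwise contraction of $(0,4)$-tensors.

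The heart of the calculation is therefore $(R'_g(fg),R)$. From (2.7), the $fR$ piece of $R'_g(fg)$ contributes $f|R|^2$, while the four $Ddf$-type terms, once contracted against $R$, each collapse to a Ricci-style contraction of the shape $\sum_j R_{\alpha j \beta j}$. This is exactly the mechanism already used in Lemma 2.2 and equation (2.4): on an irreducible symmetric space, the Einstein identity $\mathrm{Ric} = (s/n)g$ turns each such Ricci sum into a multiple of $(s/n)\,\delta$, and each of the four terms thus reduces to a constant times $(s/n)\,\mathrm{tr}(Ddf) = -(s/n)\Delta f$. Adding them produces $(R'_g(fg),R) = f|R|^2 + C(s/n)\Delta f$ for an explicit constant $C$.

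Substituting back, the $-3f|R|^2 + 2f|R|^2 = -f|R|^2$ cancellation gives
\[
(\check R'_g(fg),fg)_{\mathrm{pt}} = f\,\mathrm{tr}(\check R'_g(fg)) = -f^2|R|^2 + C'\,\tfrac{s}{n}\,f\Delta f.
\]
Two standard features of an irreducible symmetric space let me integrate cleanly: $|R|^2$ is constant (because $R$ is parallel), so $\int_M f^2|R|^2\,dv_g = |R|^2\|f\|^2$; and $\int_M f\Delta f\,dv_g = \|df\|^2$ by integration by parts, using the paper's convention $\Delta f = -\mathrm{tr}(Ddf)$. Collecting constants then must reproduce $4\frac{s}{n}\|df\|^2 - |R|^2\|f\|^2$. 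The principal obstacle is sign bookkeeping for the four Ricci contractions coming from the $Ddf$-part of (2.7), together with making sure the $-3+2$ cancellation in the $f|R|^2$ coefficient lines up so that the final $\|f\|^2$ coefficient is exactly $-|R|^2$; any slip in either place would spoil the constant $4$ in front of $\frac{s}{n}\|df\|^2$.
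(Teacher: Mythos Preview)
Your approach is essentially identical to the paper's: differentiate the three inverse-metric factors and the two curvature factors in $\check R_{pq}=g^{ii'}g^{jj'}g^{kk'}R_{pijk}R_{qi'j'k'}$, reduce the trace against $fg$ to $-3|R|^2\|f\|^2+2\langle (R'_g(fg)),fR\rangle$, and then evaluate $(R'_g(fg),R)$ from (2.7) using the Einstein condition. The paper carries out the one step you leave symbolic, obtaining $(R'_g(fg),R)=2\tfrac{s}{n}\Delta f+f|R|^2$ (so your $C=2$, $C'=4$), after which the $-3+2$ cancellation and $\langle \Delta f,f\rangle=\|df\|^2$ give the stated formula.
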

\begin{proof}
\Bea \Rt _{pq}=\gt^{i_1i_2}\gt^{j_1j_2}\gt^{k_1k_2}R_{pi_1j_1k_1}R_{qi_2j_2k_2}
\Eea
Differentiating it with respect to $t$ and evaluating on an orthonormal basis we have,
\Bea (\Rt_g.h)'_{pq}&=&-h_{mn}\left(R_{pmij}R_{qnij}+R_{pimj}R_{qinj}+R_{pijm}R_{qijn}\right)\\
&&+(R'_g.h)_{pijk}R_{qijk}+R_{pijk}(R'_g.h)_{qijk}
\Eea
Therefore,
\Bea\langle\Rt '_g(fg),fg\rangle &=& -3|R|^2\|f\|^2+ 2\langle (R'_g.fg), fR\rangle \Eea
Using (2.7) we have,
\bea (\Rt '_g(fg),R)&=&\frac{1}{2}\sum[Ddf_{jk}R_{ijki}+Ddf_{il}R_{ijjl}-Ddf_{ik}R_{ijkj}-Ddf_{jl}R_{ijil}]+f|R|^2\\
\nonumber&=&2\mu\la f+f|R|^2
\eea
Hence the lemma follows.
\end{proof}
\begin{lem}$(|R|^p)'(fg)=2\frac{s}{n} p|R|^{p-2}\la f -pf|R|^p$
\end{lem}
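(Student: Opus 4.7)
My plan is to apply the chain rule to the identity $|R|^p = (|R|^2)^{p/2}$ and then expand $(|R|^2)'_g(fg)$ by direct differentiation of a tensor contraction. The only non-routine ingredient will reduce to a pointwise identity already established inside the derivation of (2.8).

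First, since $|R|^p = (|R|^2)^{p/2}$, the chain rule yields the pointwise formula
\Bea
(|R|^p)'_g(fg) \;=\; \tfrac{p}{2}\,|R|^{p-2}\,(|R|^2)'_g(fg),
\Eea
so it suffices to prove
\Bea
(|R|^2)'_g(fg) \;=\; 4\tfrac{s}{n}\la f - 2f|R|^2.
\Eea

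Next I would expand $|R|^2 = \gt^{ia}\gt^{jb}\gt^{kc}\gt^{ld}R_{ijkl}R_{abcd}$ along the one-parameter family $\gt(t) = g + tfg$ and differentiate at $t=0$. A short calculation from $\gt^{ij}\gt_{jk} = \delta^i_k$ gives $(\gt^{ij})'_g(fg) = -fg^{ij}$, so each of the four inverse-metric variations contributes $-f$ times the unvaried $|R|^2$, totalling $-4f|R|^2$. By the pair-interchange symmetry of the Riemann tensor the two variations falling on the $R$-factors combine into $2\,(R'_g(fg),R)$ pointwise.

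The remaining ingredient is the pointwise identity
\Bea
(R'_g(fg),R) \;=\; 2\tfrac{s}{n}\la f + f|R|^2,
\Eea
which is precisely the computation displayed in (2.8) during the proof of Lemma 2.3: substituting (2.7) for $R'_g(fg)$, the four $Ddf$-contractions against the Riemann tensor collapse, by the Einstein condition $Ric = \tfrac{s}{n}g$ on the irreducible symmetric space $(M,g)$, into a single multiple of $\la f$, while the $fR$-term in (2.7) produces $f|R|^2$. Assembling the pieces gives $(|R|^2)'_g(fg) = 4\tfrac{s}{n}\la f - 2f|R|^2$, and the chain-rule formula above then delivers the stated expression for $(|R|^p)'_g(fg)$.

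No substantive obstacle is expected: the one nontrivial calculation, $(R'_g(fg),R)$ from (2.7), has already been carried out in the preceding lemma, and the remainder is routine differentiation of a tensor contraction powered by $(\gt^{-1})'_g(fg) = -fg^{-1}$ together with the pair-interchange symmetry of $R$.
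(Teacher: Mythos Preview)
Your proposal is correct and follows essentially the same route as the paper: both apply the chain rule $(|R|^p)'=\tfrac{p}{2}|R|^{p-2}(|R|^2)'$, split $(|R|^2)'$ into the inverse-metric contributions and the $2(R'_g(fg),R)$ term, and then invoke the identity $(R'_g(fg),R)=2\tfrac{s}{n}\Delta f+f|R|^2$ from (2.8). The only cosmetic difference is that the paper records the inverse-metric contribution as $-2p|R|^{p-2}(\Rt,fg)$ before using $\mathrm{tr}\,\Rt=|R|^2$, whereas you substitute $(\gt^{ij})'=-fg^{ij}$ directly to get $-4f|R|^2$; these are the same computation.
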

\begin{proof}
\Bea (|R|^p)'(fg)&=&p|R|^{p-2}( R,R'_g.fg)-2p|R|^{p-2}(\Rt,fg)\\
&=&p|R|^{p-2}(R,R'_g.fg)-2pf|R|^{p-2}tr(\Rt)\\
&=&p|R|^{p-2}(2\mu \la f+f|R|^2)-2pf|R|^p\\
&=&2\mu p|R|^{p-2}\la f -pf|R|^p
\Eea
\end{proof}
Using (2.6) we have,
$$(Dd^Dfg,R)=2(Ddf,r)=-2\mu \la f$$
Now the Proposition follows from the above lemma and equation. \\
\\
{\it Proof of Theorem 1.1:} Let $(M,g)$ is a simply connected irreducible symmetric space of compact type which is not a sphere. Then
$$R=\frac{s}{n(n-1)}I+W$$
where $I$ is the curvature of standard sphere with sectional curvature $1$ and $W$ is the Weyl curvature of $(M,g).$ From the above expression we have
$$\frac{s^2}{|R|^2}< \frac{2}{n(n-1)}$$
Let $\lambda_1$ be the first positive eigenvalue of the Laplacian of $(M,g)$ and $f$ be an eigenfunction corresponding to $\lambda_1$. Then from  Proposition 2.1 we have,
\Bea H(fg,fg)&=& s\lambda_1p|R|^{p-2}[a(\frac{\lambda_1}{s})-\frac{b}{s}]\|f\|^2\\
&\leq& s\lambda_1p|R|^{p-2}[\huge(n-1 + 4\frac{n-4}{n^3(n-1)}\huge)\frac{\lambda_1}{s}-2\frac{n-2}{n}]\|f\|^2
\Eea
From the Table A.1 and A.2 in \cite{HU} we have,
$$H(fg,fg)<0.$$
Next, we choose a sufficiently large eigenvalue $\lambda_i$ such that $a(\frac{\lambda_i}{s})-\frac{b}{s}<0$. Let $\tilde{f}$ be an eigenfunction corresponding to $\lambda_i.$ Then we have, 
$$H(\tilde{f}g,\tilde{f}g)<0$$
This completes the proof.
\hfill
$\square$\\
\begin{thm2} Let $(M,g)$ be either a compact quotient of an irreducible symmetric of non-compact type or a compact symmetric space which is not one of the types in Theorem 1. $(M,g)$ is stable for $\mathcal{R}_p$ for $p\geq \frac{n}{2}$ when it is restricted to the space of conformal variations of $g$.
\end{thm2}
\begin{proof} If $(M,g)$ is a a compact quotient of an irreducible symmetric of non-compact type then the theorem is an immediate consequence of Proposition 2.1. Otherwise from the Table A.1 and A.2 in \cite{HU} we have $\frac{\lambda_1}{s}\geq \frac{2}{n}.$ Therefore,
\Bea H(fg,fg)&\geq& sp\lambda_1|R|^{p-2}[a(\frac{\lambda_1}{s})-\frac{b}{s}]\|f\|^2\\
&\geq& sp\lambda_1|R|^{p-2}[(n-1)\frac{\lambda_1}{s}-2\frac{n-2}{n}]\|f\|^2\\
&\geq& 2sp\lambda_1|R|^{p-2}[\frac{n-1}{n}-2\frac{n-2}{n}]\|f\|^2\\
&\geq& 0
\Eea
\end{proof}
{\bf Remark :} Let $(M,g)$ be one of the critical metrics of $\mathcal{R}_p$ mentioned in Theorem 2. This is an immediate consequence of the above theorem that $(M,g)$ is a local minimizer for $\mathcal{R}_p(p\geq \frac{n}{2})$ restricted to the space of metrics conformal to $g$.

\end{document}